\documentclass[11pt]{amsart}
\usepackage{amsmath, amsthm, amssymb}
\usepackage{geometry}
\usepackage{setspace}
\usepackage{array, booktabs}
\usepackage{enumitem}
\setlist[enumerate,1]{label={(\arabic*)},leftmargin=3em}

\theoremstyle{plain}
\newcounter{thmcount}[section]

\theoremstyle{definition}
\newtheorem{Definition}[thmcount]{Definition}
\theoremstyle{plain}
\newtheorem{Lemma}[thmcount]{Lemma}
\newtheorem{Theorem}[thmcount]{Theorem}

\newcommand{\cald}{\mathcal{D}}
\newcommand{\calg}{\mathcal{G}}
\newcommand{\band}{\overline{\mathcal{G}}}

\title[Equal Lagrange Numbers with Non-Isomorphic Band Graphs]{Equal Lagrange
Numbers with Non-Isomorphic Band Graphs\\
\large A counterexample to Schiffler's Problem 6.3 }
\author{Qiyue Tang \and Yizhi Zhang}
\date{}
\theoremstyle{definition}
\newtheorem{Example}[thmcount]{Example}
\begin{document}

\begin{abstract}
We consider the following question: if two lattice paths in the same set
$\cald(a,b)$ have the same Lagrange number, must their band graphs be
isomorphic? We exhibit two explicit lattice paths in $\cald(17,9)$ with the
same Lagrange number but non-isomorphic associated band graphs, thereby giving
a counterexample to this question.

\vspace{1em}
\noindent\textbf{Keywords:} lattice paths, snake graphs, band graphs,
continued fractions, Lagrange numbers.

\noindent\textbf{MSC(2020):} 06A07, 11J06, 05C70, 13F60.
\end{abstract}

\maketitle

\section{Introduction}

Ralf Schiffler studied the following class of lattice paths in
\cite{Schiffler}. Fix relatively prime positive integers $a,b$ with $0<b<a$.
Consider lattice paths from $(0,0)$ to $(a,b)$ whose steps are either rightward
or upward and which never cross above the diagonal joining $(0,0)$ to $(a,b)$.
Equivalently, every point $(x,y)$ visited by the path satisfies $ay\le bx$.
The set of all such paths is denoted by $\cald(a,b)$.

For each $\omega\in\cald(a,b)$, Schiffler constructed a snake graph
$\calg(\omega)$ and a band graph $\band(\omega)$. Snake graphs and band graphs
arise in cluster algebras from surfaces, where they describe Laurent expansions
of cluster variables and related basis elements. In this lattice-path model,
the square lattice may be viewed as the universal cover of the once-punctured
torus: lattice paths correspond to arcs on the torus, while band graphs are
related to the combinatorial expansions of the corresponding closed curves
\cite[Introduction]{Schiffler}. Thus the problem links graph models from
cluster algebra with continued fractions in number theory.

Schiffler further associated two numerical invariants to every
$\omega\in\cald(a,b)$. The first, $M(\omega)$, is the number of perfect matchings
of the snake graph $\calg(\omega)$ and is related to Markov numbers. The second,
$L(\omega)$, is the Lagrange number of the periodic continued fraction determined
by the band graph $\band(\omega)$ and is related to the Lagrange spectrum. These
two quantities define the matching order and the Lagrange order on lattice paths.

Against this background, Schiffler posed Problem~6.3: for two paths
$\omega,\omega'$ in the same set $\cald(a,b)$, does
\[
 L(\omega)=L(\omega')
 \quad\Longrightarrow\quad
 \band(\omega)\cong\band(\omega')?
\]
See \cite[Problem 6.3]{Schiffler}. In other words, with the endpoints and the
diagonal constraint fixed, does the Lagrange number determine the isomorphism
class of the band graph? The cited source gives an example of two paths with
equal Lagrange numbers and isomorphic band graphs, but that example does not
establish the implication for arbitrary paths.

We give a counterexample and hence answer Problem~6.3 in the negative.

\begin{Theorem}
\label{thm:intro-main}
The two lattice paths
\[
\begin{aligned}
\omega_1&=RRURRURRRRRRURURUURRURRRUU,\\
\omega_2&=RRURRRURRRRRURUURURRURRRUU
\end{aligned}
\]
belong to $\cald(17,9)$ and satisfy
\[
 L(\omega_1)=L(\omega_2),
 \qquad
 \band(\omega_1)\not\cong\band(\omega_2).
\]
Consequently, lattice paths with the same Lagrange number need not have
isomorphic band graphs.
\end{Theorem}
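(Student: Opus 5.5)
The plan is a direct verification in three independent parts: membership in $\cald(17,9)$, computation and comparison of the two Lagrange numbers, and a proof that the band graphs are not isomorphic.

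\emph{Membership.} Each word has $17$ letters $R$ and $9$ letters $U$, so both paths end at $(17,9)$. The condition $9y\le 17x$ from the definition of $\cald(a,b)$ can only fail right after a $U$-step, because an $R$-step only increases $9x-17y$. So it suffices to evaluate $9x-17y$ at the nine vertices reached by $U$-steps in each word and check that all nine values are nonnegative. For example, the first $U$ of $\omega_1$ is reached at $(2,1)$ and $9\cdot 2-17=1\ge0$.

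\emph{Translation to cyclic sequences.} Next I pass from each path to the data Schiffler attaches to $\band(\omega)$. In his construction, the band graph is obtained by gluing the ends of the snake graph $\calg(\omega)$. The snake graph is encoded by its sign function on interior edges, and hence by the cyclic sequence of run lengths of equal signs. This cyclic sequence $(a_1,\dots,a_n)$ defines the periodic continued fraction
\[
[\overline{a_1,a_2,\dots,a_n}],
\]
and $L(\omega)$ is the supremum over positions $i$ of
\[
[a_i;a_{i+1},\dots]+[0;a_{i-1},a_{i-2},\dots].
\]
Two band graphs are isomorphic exactly when their cyclic sign-run sequences agree up to rotation and reversal. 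Indeed, a band graph isomorphism must preserve the cyclic tile chain and its turn pattern, possibly reversing orientation. I will record this criterion as a lemma, proved directly from the definition of band graphs: the degree and zigzag/straight structure along the cycle recovers the run lengths. I then compute the two cyclic sequences explicitly from $\omega_1$ and $\omega_2$. The crossings of the path with the lattice determine the tiles and their turns.

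\emph{Comparison.} With the sequences in hand, I first show that they are not equivalent under the dihedral action. This is a finite check, for instance comparing multisets of adjacent pairs or the positions of the largest entries, and it gives $\band(\omega_1)\not\cong\band(\omega_2)$ by the lemma. For the Lagrange numbers, both sums are eventually periodic, so each is a quadratic irrational computable in closed form. The supremum is attained at one of finitely many starting positions, up to reversal. For each word I identify the position carrying the maximal value: heuristically, the one starting at the largest partial quotient and looking lexicographically largest in the alternating order on both sides. I then compute that value exactly as $\sqrt{D}$-type expressions via the $2\times2$ matrix product of the period; Lagrange values of such sequences have the shape $\sqrt{9m^2-4}/m$-like forms coming from the trace. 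Finally I check that the two maxima coincide. The natural mechanism is that the maximal window is the same in both sequences, and the sequences differ only far from it in a way that does not affect the trace of the period matrix.

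The main obstacle is this last step. One must justify rigorously which position attains the supremum, and show that the two exact values agree even though the sequences differ. I would handle the first point with the standard alternating lexicographic comparison of continued fractions, which bounds every other position strictly below the candidate. For the second, I would verify that the two period matrices have equal trace, and that the maximal position yields the same expression $\sqrt{\operatorname{tr}^2-4}/c$ with the same $c$. Here $c$ is the relevant lower-left entry of the period matrix read from that position.
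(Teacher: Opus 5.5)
Your outline has the same architecture as the paper's proof: check the diagonal condition, compute the coefficient words, compare traces and lower-left entries of the period matrices, and show the words are not dihedrally equivalent. It breaks down, however, at the two places where the proof actually has content.

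\textbf{Non-isomorphism.} The invariant you suggest does not separate the two words. The coefficient words are
\[
c(\omega_1)=21122112211111111112222211211221111211,\qquad
c(\omega_2)=21122111122111111112221122211221111211.
\]
Each consists of $24$ ones and $14$ twos, arranged in $7$ cyclic runs of each letter. So both have exactly $17$ cyclic adjacent pairs $11$ and $7$ each of $12$, $21$, $22$. ``Positions of the largest entries'' is no shortcut either, since every entry is $1$ or $2$; it just amounts to the full comparison. What works is a finer invariant, such as the multiset of lengths of maximal cyclic runs of $2$'s: $\{1,1,1,2,2,2,5\}$ versus $\{1,1,2,2,2,3,3\}$. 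The paper uses the equivalent count of cyclic occurrences of $222$, which is $3$ versus $2$. A brute-force check of all $76$ rotations and reflections would also do.

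\textbf{Lagrange numbers.} Your proposed ``natural mechanism'' is not valid. The words differ in positions $8$--$11$ and $23$--$26$. Both the trace $T$ and the entry $q_{36}$ (the continuant of $c_{37},c_{38},c_1,\dots,c_{35}$) depend on every entry of the period, so nothing local forces them to agree. Their equality is a numerical fact that has to be computed: for both words $T=19974212955$ and $\min_k q_k=q_{36}=6252914545$, attained only at $k=36$.

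You also do not need the alternating lexicographic comparison, which would be delicate anyway because it must compare sums of a forward and a backward tail. Since $n=38$ is even, $\det P_k=1$, and $\alpha_k-\alpha_k'=\sqrt{T^2-4}/q_k$ holds at every cut with the same $T$. So the maximizing cut is simply the one with the smallest of the $38$ lower-left entries, which you compute directly. Until the coefficient words and these numbers are actually produced, $L(\omega_1)=L(\omega_2)$ is not proved.

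\textbf{Smaller points.} The diagonal condition is $17y\le 9x$, not $9y\le 17x$; your numerical test $9x-17y\ge 0$ is the correct one. Also, the paper takes dihedral equivalence of coefficient words as the \emph{definition} of band-graph isomorphism. If you want abstract graph isomorphism instead, your lemma needs a genuine argument that every graph isomorphism carries tiles to tiles, and your sketch does not give one.
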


The remainder of the paper is organized as follows. Section~2 introduces
lattice paths, coefficient words, periodic continued fractions, Lagrange
numbers, and the criterion used for band-graph isomorphism. Section~3 gives the
complete computation for the counterexample. Appendix~A contains a certificate
program for checking the finite computational data.

\section{Background}

We recall the basic concepts and results needed below; the underlying
constructions come from \cite{Schiffler}.

\begin{Definition}
\label{def:lattice-path}
Let $a,b$ be relatively prime positive integers with $0<b<a$. A \emph{lattice
path} from $(0,0)$ to $(a,b)$ is a word in the letters $R$ and $U$, where $R$
denotes a rightward unit step and $U$ an upward unit step; the word contains
exactly $a$ letters $R$ and $b$ letters $U$.

If every point $(x,y)$ visited by the path satisfies
\[
 ay\le bx,
\]
then the path is said to lie below the diagonal joining $(0,0)$ to $(a,b)$;
touching the diagonal is allowed. The set of all such paths is denoted by
$\mathcal{D}(a,b)$. We further set
\[
 \mathcal{D}
 =\bigcup_{\substack{0<b<a\\ \gcd(a,b)=1}}\mathcal{D}(a,b).
\]
\end{Definition}

\begin{Definition}
\label{def:snake-graph}
A \emph{tile} is the square planar graph with four vertices and four edges. A
\emph{snake graph} $\mathcal{G}$ is a connected planar graph obtained by gluing
a finite sequence of tiles
\[
G_1,G_2,\ldots,G_d
\]
in order. For every $1\le i<d$, the tile $G_{i+1}$ shares exactly one full edge
with $G_i$: either the north edge of $G_i$ is identified with the south edge of
$G_{i+1}$, or the east edge of $G_i$ is identified with the west edge of
$G_{i+1}$. A newly added tile does not overlap any earlier nonconsecutive tile.
The common edges of consecutive tiles are called \emph{interior edges}; all
remaining edges are \emph{boundary edges} \cite[Section 2.1]{Schiffler}.
\end{Definition}

\begin{Definition}
\label{def:band-graph}
A \emph{sign function} on a snake graph assigns $+$ or $-$ to each edge so that,
on every tile, the north and west edges have the same sign, the south and east
edges have the same sign, and the north and south edges have opposite signs.
Let the tiles of $\mathcal{G}$ be $G_1,\ldots,G_d$. Choose either the south or
west edge of $G_1$, and then choose an edge of the same sign among the north and
east edges of $G_d$. Identifying these two edges by their corresponding endpoints
produces a \emph{band graph} \cite[Section 2.2]{Schiffler}.

If the lengths of the maximal constant blocks in the cyclic sign sequence of a
band graph are $a_1,\ldots,a_n$ in order, we denote the graph by
\[
\mathcal{G}[\overline{a_1,\ldots,a_n}].
\]
Because a band graph has no distinguished starting tile or direction, this
cyclic sequence is considered up to cyclic shift and reversal.
\end{Definition}

\begin{Definition}
\label{def:coefficient-word}
Let $\omega=x_1\cdots x_L$ be a lattice path in $\mathcal{D}$, with
$x_i\in\{R,U\}$. Its \emph{coefficient word}, denoted by $c(\omega)$, is formed
as follows. First write $2$. Then examine the adjacent pairs $x_ix_{i+1}$ from
left to right for $1\le i\le L-1$. Append $1,1$ if
$x_ix_{i+1}\in\{RR,UU\}$, and append $2$ if
$x_ix_{i+1}\in\{RU,UR\}$.

If $c(\omega)=c_1\cdots c_n$, then the band graph associated with $\omega$ is
\[
\overline{\mathcal{G}}(\omega)
=\mathcal{G}[\overline{c_1,\ldots,c_n}],
\]
in agreement with the construction obtained by placing tiles along the path and
identifying the first and last boundary edges \cite[Definition 4.1]{Schiffler}.
\end{Definition}

\begin{Example}
For \(\omega = RRU\), the rule above yields \(c(\omega) = 2112\).
\end{Example}

A \emph{finite continued fraction} is a function
\[
    [a_1, a_2, \dots, a_n] = a_1 + \cfrac{1}{a_2 + \cfrac{1}{\ddots + \cfrac{1}{a_n}}}
\]
of \(n\) positive integers \(a_1, \dots, a_n\). Similarly, an \emph{infinite continued fraction} is a function
\[
    [a_1, a_2, \dots] = a_1 + \cfrac{1}{a_2 + \cfrac{1}{a_3 + \cfrac{1}{\ddots}}}
\]
of infinitely many positive integers \(a_1, a_2, \dots\). We use the notation \([\overline{a_1, \dots, a_n}]\) for the periodic continued fraction, where \([\overline{a_1, \dots, a_n}] = [a_1, \dots, a_n, a_1, \dots, a_n, \dots]\).

\begin{Definition}
\label{def:lagrange-number}
Given a real number $\alpha$, its \emph{Lagrange number} $L(\alpha)$ is
defined as the supremum of all real numbers $\ell$ for which there exist
infinitely many rational numbers $\frac{p}{q}$ such that
\[
  \left|\alpha-\frac{p}{q}\right|<\frac{1}{\ell\,q^2}.
\]
\end{Definition}

For a lattice path $\omega$ with coefficient word
$c(\omega)=c_1c_2\cdots c_n$, Schiffler \cite[Definition 4.3]{Schiffler}
defines
\[
  L(\omega)=\text{Lagrange number of }[\overline{c_1,c_2,\ldots,c_n}],
\]
and \cite[Remark 4.4]{Schiffler} records the formula
\[
  L(\omega)=\max(\alpha-\alpha'),
\]
where $\alpha$ runs over all cyclic shifts of
$[\overline{c_1,c_2,\ldots,c_n}]$ and $\alpha'$ is the conjugate of $\alpha$;
see also \cite[Proposition 1.29]{Aigner}.  

\begin{Lemma}
\label{lem:matrix-formula}
Let $\omega$ be a lattice path with coefficient word
$c(\omega)=c_1c_2\cdots c_n$. For each positive integer $d$, put
\[
  A(d)=\begin{pmatrix}d&1\\1&0\end{pmatrix},
\]
and for $k=1,\ldots,n$, with indices taken modulo $n$, set
\[
  P_k=A(c_k)A(c_{k+1})\cdots A(c_{k+n-1})
     =\begin{pmatrix}p_k&r_k\\q_k&s_k\end{pmatrix}.
\]
Then the following hold.
\begin{enumerate}
\item $\det P_k=(-1)^n$ for every $k$.
\item $P_{k+1}=A(c_k)^{-1}P_kA(c_k)$; in particular the trace
  $T=\operatorname{tr}(P_k)=p_k+s_k$ is independent of $k$.
\item Let $\alpha_k=[\overline{c_k,c_{k+1},\ldots,c_{k+n-1}}]$ be the cyclic
  shift of $[\overline{c_1,\ldots,c_n}]$ starting at $c_k$, and let
  $\alpha_k'$ be its conjugate.  Then
  \[
    \alpha_k-\alpha_k'=\frac{\sqrt{T^2-4(-1)^n}}{|q_k|}.
  \]
\item Consequently,
  \[
    L(\omega)=L([\overline{c_1,\ldots,c_n}])
             =\frac{\sqrt{T^2-4(-1)^n}}{\min_k |q_k|}.
  \]
  If every $q_k>0$ and $n$ is even, this becomes
  \[
    L(\omega)^2=\frac{T^2-4}{(\min_k q_k)^2}.
  \]
\end{enumerate}
\end{Lemma}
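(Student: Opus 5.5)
The plan is to work entirely with $2\times2$ matrices over $\mathbb{Z}$ and the standard correspondence between periodic continued fractions and their period matrices. The four parts will be proved in order.

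For (1), note that $\det A(d)=-1$ for every $d$. Since $P_k$ is a product of $n$ such matrices, multiplicativity of the determinant gives $\det P_k=(-1)^n$.

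For (2), since indices are taken modulo $n$, we have $c_{k+n}=c_k$. Hence
\[
A(c_k)P_{k+1}=A(c_k)A(c_{k+1})\cdots A(c_{k+n})=P_kA(c_k).
\]
Here $A(c_k)$ is invertible with determinant $-1$, so $P_{k+1}=A(c_k)^{-1}P_kA(c_k)$. Conjugate matrices have equal traces, so $T$ does not depend on $k$.

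For (3), use the usual identity for convergents: $A(a_1)\cdots A(a_m)$ sends $x$ to $[a_1,\ldots,a_m,x]$ under the Möbius action $\begin{pmatrix}p&r\\q&s\end{pmatrix}\cdot x=(px+r)/(qx+s)$. This follows by induction, because $A(d)\cdot x=d+1/x$. Periodicity then gives $\alpha_k=P_k\cdot\alpha_k$, so $\alpha_k$ is a root of
\[
q_kx^2+(s_k-p_k)x-r_k=0.
\]
The quadratic is genuine: $q_k>0$, since all $c_i\ge1$ make every entry of $P_k$ positive once $n\ge2$, and coefficient words here have length $\ge2$. The conjugate $\alpha_k'$ is the other root, so
\[
(\alpha_k-\alpha_k')^2=\frac{(s_k-p_k)^2+4q_kr_k}{q_k^2}.
\]
Using $(s_k-p_k)^2=(p_k+s_k)^2-4p_ks_k$ together with $p_ks_k-q_kr_k=(-1)^n$, the numerator equals $T^2-4(-1)^n$. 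Finally $\alpha_k>1>0>\alpha_k'$, because $\alpha_k$ is purely periodic (Galois' theorem: $-1<\alpha_k'<0$). So the difference is positive, and taking the positive square root gives the formula.

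For (4), the cited formula $L=\max_k(\alpha_k-\alpha_k')$ from \cite[Remark 4.4]{Schiffler} combines with (3). The numerator $\sqrt{T^2-4(-1)^n}$ is independent of $k$ by (2), so the maximum is attained at $\min_k|q_k|$. When $n$ is even and all $q_k>0$, squaring gives the stated expression.

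The only step needing care is (3): getting the correct orientation of the Möbius action so that the fixed-point equation has the coefficients above, and justifying the sign of $\alpha_k-\alpha_k'$. I would verify the orientation first on the case $n=1$, where $\alpha=[\overline{d}]$ and $P=A(d)$.
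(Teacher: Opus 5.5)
Your proposal is correct and follows essentially the same route as the paper. It uses multiplicativity of the determinant for (1), the conjugation identity $P_{k+1}=A(c_k)^{-1}P_kA(c_k)$ for (2), the fixed-point quadratic $q_kx^2+(s_k-p_k)x-r_k=0$ with discriminant $T^2-4\det P_k$ for (3), and the cited formula $L=\max_k(\alpha_k-\alpha_k')$ for (4). Your extra justifications fill in details the paper leaves implicit but do not change the argument: the M\"obius identity showing $\alpha_k$ is fixed by $P_k$, positivity of $q_k$, and $\alpha_k>\alpha_k'$ via Galois' theorem.
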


\begin{proof}
Since $\det A(d)=-1$, (1) follows from the multiplicativity of the determinant.
For (2), $c_{k+n}=c_k$, so
\[
  P_{k+1}=A(c_k)^{-1}P_kA(c_k),
\]
and the trace is invariant under conjugation.

The fixed points of the fractional linear transformation
$x\mapsto(p_kx+r_k)/(q_kx+s_k)$ are the roots of
\[
  q_kx^2+(s_k-p_k)x-r_k=0,
\]
namely $\alpha_k$ and $\alpha_k'$. Their difference satisfies
\[
  (\alpha_k-\alpha_k')^2
  =\frac{(p_k-s_k)^2+4q_kr_k}{q_k^2}
  =\frac{(p_k+s_k)^2-4(p_ks_k-q_kr_k)}{q_k^2}
  =\frac{T^2-4(-1)^n}{q_k^2}.
\]
As $\alpha_k>\alpha_k'$, taking the positive square root gives (3), and (4)
follows from Definition~\ref{def:lagrange-number} since
\[
  \max_k(\alpha_k-\alpha_k')
  =\frac{\sqrt{T^2-4(-1)^n}}{\min_k |q_k|}.
\]
\end{proof}

\begin{Definition}
\label{def:cyclic-equivalence}
Let \(\omega\) and \(\upsilon\) be lattice paths, and let
\[
c(\omega)=c_1c_2\cdots c_n,\qquad c(\upsilon)=d_1d_2\cdots d_n
\]
be their associated coefficient words in \(\{1,2\}^n\), where indices are understood modulo \(n\) (so that \(c_{i+n}=c_i\) and \(d_{i+n}=d_i\) for all \(i\)).

Define the cyclic shift operator \(T\) by
\[
T(c_1,\dots,c_n)=(c_2,\dots,c_n,c_1),
\]
and the reversal operator \(R\) by
\[
R(c_1,\dots,c_n)=(c_n,\dots,c_2,c_1).
\]
Two coefficient words \(c,d\in\{1,2\}^n\) are said to be \emph{cyclically equivalent}, denoted \(c\sim d\), if there exists an integer \(k\) with \(0\le k<n\) such that
\[
d=T^k(c)\quad\text{or}\quad d=T^k(R(c)).
\]

We now define isomorphism between the associated band graphs. The band graphs of
\(\omega\) and \(\upsilon\), denoted respectively by
\[
\overline{\mathcal{G}}(\omega)\quad\text{and}\quad
\overline{\mathcal{G}}(\upsilon),
\]
are called \emph{isomorphic}, written
\[
\overline{\mathcal{G}}(\omega)\cong \overline{\mathcal{G}}(\upsilon),
\]
if and only if their coefficient words satisfy
\[
c(\omega)\sim c(\upsilon).
\]
\end{Definition}

\begin{Lemma}
\label{lem:triple-two-invariant}
Let \(\omega\) be a lattice path with coefficient word
\[
c(\omega)=c_1c_2\cdots c_n \in \{1,2\}^n,
\]
where indices are taken modulo \(n\). For each maximal cyclic block of consecutive \(2\)'s in \(c(\omega)\), let its length be \(L\). Suppose the lengths of all such blocks are
\[
L_1, L_2, \dots, L_m \qquad (m\ge 0),
\]
where \(m=0\) if there are no \(2\)'s. Define the integer
\[
\mathcal{N}\bigl(c(\omega)\bigr) := \sum_{j=1}^{m} \max(0,\, L_j - 2).
\]
Equivalently, \(\mathcal{N}\bigl(c(\omega)\bigr)\) counts the number of occurrences of the contiguous subword \(222\) in the cyclic word \(c(\omega)\), with multiplicity (so a block of length \(L\) contributes exactly \(L-2\) such occurrences).

Then \(\mathcal{N}\) is invariant under the cyclic equivalence relation \(\sim\)
from Definition~\ref{def:cyclic-equivalence}. Consequently, if two band graphs
are isomorphic, i.e.
\[
\overline{\mathcal{G}}(\omega) \cong \overline{\mathcal{G}}(\upsilon),
\]
then their coefficient words satisfy
\[
\mathcal{N}\bigl(c(\omega)\bigr) = \mathcal{N}\bigl(c(\upsilon)\bigr).
\]
Thus \(\mathcal{N}\) provides a necessary condition for band-graph isomorphism.
\end{Lemma}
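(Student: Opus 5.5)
The plan is to show that the multiset of maximal cyclic $2$-block lengths $\{L_1,\dots,L_m\}$ is itself invariant under $\sim$. Then $\mathcal{N}$, being a symmetric function of that multiset, is invariant too. By Definition~\ref{def:cyclic-equivalence}, $\sim$ is generated by the shift $T$ and the reversal $R$, so it suffices to check invariance under each of them. Composing then covers every $T^k$ and $T^kR$.

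For the shift, I will regard the cyclic word as a labelling $i\mapsto c_i$ of $\mathbb{Z}/n\mathbb{Z}$. Applying $T$ replaces this labelling by $i\mapsto c_{i+1}$, which is just a relabelling of positions by a rotation. A rotation preserves cyclic adjacency, so it sends maximal cyclic runs of $2$'s bijectively to maximal cyclic runs of $2$'s of the same length.

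For the reversal, the relabelling is $i\mapsto c_{n+1-i}$. This map on $\mathbb{Z}/n\mathbb{Z}$ also preserves cyclic adjacency, since $i,i+1$ go to $n+1-i,n-i$. Hence it too gives a length-preserving bijection of maximal runs.

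Two degenerate cases need a separate word. If there are no $2$'s, then $m=0$ on both sides and $\mathcal{N}=0$. If every letter is $2$, the whole cycle is one "block" with no endpoints. I will check that the convention used there is the same for $c$ and for $T(c)$ and $R(c)$. This is automatic, since the all-$2$ word is fixed by both operators.

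I will also verify the stated equivalent description: a block of length $L\ge 3$ contains exactly $L-2$ starting positions of $222$, and blocks of length $\le2$ contain none. The same shift/reversal relabelling argument shows directly that the number of positions $i$ with $c_ic_{i+1}c_{i+2}=222$ is invariant, because the pattern $222$ is a palindrome. Either route works.

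The final consequence is immediate from Definition~\ref{def:cyclic-equivalence}: band-graph isomorphism is defined as $c(\omega)\sim c(\upsilon)$. The only mildly delicate point is making "maximal cyclic block" precise, including blocks that wrap around from $c_n$ to $c_1$, which is exactly why I plan to phrase everything on $\mathbb{Z}/n\mathbb{Z}$. For the all-$2$ edge case, counting $222$-occurrences cyclically gives $n$ rather than $n-2$; I will either note that coefficient words always contain a $1$ in the relevant examples, or simply take the occurrence count as the definition there, which is invariant regardless.
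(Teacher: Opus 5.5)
Your proposal is correct and follows the paper's argument. Both show that the multiset of maximal cyclic $2$-block lengths is preserved by the shift $T$ and the reversal $R$, conclude that $\mathcal{N}$ is invariant under $\sim$, and then apply Definition~\ref{def:cyclic-equivalence}. Your extra care with wrap-around blocks and the all-$2$ word goes beyond the paper; that case in fact never arises, because every path in $\mathcal{D}(a,b)$ ends with $U$ and has $b<a$, so it contains $RR$ and $c(\omega)$ contains a $1$.
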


\begin{proof}
Let \(c=c_1\cdots c_n\in\{1,2\}^n\). Consider the two generating operations of the equivalence relation \(\sim\):

\begin{enumerate}
\item \textit{Cyclic shift.} For \(T(c)=(c_2,\dots,c_n,c_1)\), the cyclic order of the runs is merely rotated; the multiset of their lengths \(\{L_1,\dots,L_m\}\) remains unchanged. Hence
\[
\mathcal{N}(T(c))=\mathcal{N}(c).
\]

\item \textit{Reversal.} For \(R(c)=(c_n,\dots,c_2,c_1)\), the cyclic order of the runs is reversed, but the multiset of block lengths is again preserved. Therefore
\[
\mathcal{N}(R(c))=\mathcal{N}(c).
\]
\end{enumerate}

By iteration, for every integer \(k\) with \(0\le k<n\),
\[
\mathcal{N}(T^k(c))=\mathcal{N}(c)
\quad\text{and}\quad
\mathcal{N}(T^k(R(c)))=\mathcal{N}(c).
\]
Thus, whenever \(c\sim d\), we have \(\mathcal{N}(c)=\mathcal{N}(d)\).

By Definition~\ref{def:cyclic-equivalence},
\(\overline{\mathcal{G}}(\omega)\cong \overline{\mathcal{G}}(\upsilon)\)
implies \(c(\omega)\sim c(\upsilon)\). Substituting these coefficient words
gives the desired equality
\[
\mathcal{N}\bigl(c(\omega)\bigr)=\mathcal{N}\bigl(c(\upsilon)\bigr).
\]
Therefore the equality of \(\mathcal{N}\)-values is a necessary condition for
band-graph isomorphism.
\end{proof}
\section{The counterexample in $\cald(17,9)$}
We now present the explicit counterexample and verify all required properties in a single coherent flow.
Consider the two lattice paths
\[
\omega_1=RRURRURRRRRRURURUURRURRRUU,
\]
\[
\omega_2=RRURRRURRRRRURUURURRURRRUU.
\]
Each word contains exactly $17$ letters $R$ and $9$ letters $U$, and $\gcd(17,9)=1$, so both paths go from $(0,0)$ to $(17,9)$.

We begin by checking the diagonal condition \(17y \le 9x\) at every point visited by the lattice path. The table below lists, for each point \((x,y)\) along the path (including the starting and ending points), the value \(17y-9x\). All values are \(\le 0\), with equality only at the two endpoints \((0,0)\) and \((17,9)\); hence both paths stay below the diagonal, i.e. \(\omega_1,\omega_2\in\mathcal{D}(17,9)\).

\noindent
\begingroup
\centering
\small
\begin{tabular}{c|cc|cc}
$i$ & $\omega_1$: $(x,y)$ & $17y-9x$ & $\omega_2$: $(x,y)$ & $17y-9x$\\ \hline
0 & (0,0) & 0 & (0,0) & 0\\
1 & (1,0) & $-9$ & (1,0) & $-9$\\
2 & (2,0) & $-18$ & (2,0) & $-18$\\
3 & (2,1) & $-1$ & (2,1) & $-1$\\
4 & (3,1) & $-10$ & (3,1) & $-10$\\
5 & (4,1) & $-19$ & (4,1) & $-19$\\
6 & (4,2) & $-2$ & (5,1) & $-28$\\
7 & (5,2) & $-11$ & (5,2) & $-11$\\
8 & (6,2) & $-20$ & (6,2) & $-20$\\
9 & (7,2) & $-29$ & (7,2) & $-29$\\
10 & (8,2) & $-38$ & (8,2) & $-38$\\
11 & (9,2) & $-47$ & (9,2) & $-47$\\
12 & (10,2) & $-56$ & (10,2) & $-56$\\
13 & (10,3) & $-39$ & (10,3) & $-39$\\
14 & (11,3) & $-48$ & (11,3) & $-48$\\
15 & (11,4) & $-31$ & (11,4) & $-31$\\
16 & (12,4) & $-40$ & (11,5) & $-14$\\
17 & (12,5) & $-23$ & (12,5) & $-23$\\
18 & (12,6) & $-6$ & (12,6) & $-6$\\
19 & (13,6) & $-15$ & (13,6) & $-15$\\
20 & (14,6) & $-24$ & (14,6) & $-24$\\
21 & (14,7) & $-7$ & (14,7) & $-7$\\
22 & (15,7) & $-16$ & (15,7) & $-16$\\
23 & (16,7) & $-25$ & (16,7) & $-25$\\
24 & (17,7) & $-34$ & (17,7) & $-34$\\
25 & (17,8) & $-17$ & (17,8) & $-17$\\
26 & (17,9) & 0 & (17,9) & 0\\
\end{tabular}
\par
\endgroup

\vspace{1.5ex}
\noindent Next, applying Definition~\ref{def:coefficient-word} to the two paths
yields the coefficient words
\[
c(\omega_1)=21122112211111111112222211211221111211;
\]
\[
c(\omega_2)=21122111122111111112221122211221111211.
\]
Both words have the even length $n=38$. This parity is essential: by
Lemma~\ref{lem:matrix-formula}, we have $\det P_k=(-1)^{38}=1$ for every cyclic
product $P_k$, so the formula in that lemma simplifies to
\[
L(\omega)^2=\frac{T^2-4}{(\min_k q_k)^2},
\]
where $T$ is the common trace and $q_k$ is the bottom-left entry of $P_k$.

For each of the two coefficient words we compute the 38 products
$P_k=A(c_k)\cdots A(c_{k+37})$, with indices taken modulo 38 and
$A(d)=\begin{pmatrix}d&1\\1&0\end{pmatrix}$. The trace is constant for each word; remarkably, it is the same for both:
\[
T=19974212955.
\]
The following table gives the bottom-left entries $q_k$ for all cuts
$k=1,\ldots,38$; the minimum at $k=36$ is highlighted.

\begin{center}
\footnotesize
\begin{tabular}{c|rr|c|rr}
$k$ & $q_k(\omega_1)$ & $q_k(\omega_2)$ & $k$ & $q_k(\omega_1)$ & $q_k(\omega_2)$\\\hline
1 & 6307169825 & 6306933953 & 20 & 6586998175 & 6577139161\\
2 & 9546673265 & 9548244695 & 21 & 7145420917 & 7214212165\\
3 & 9335043271 & 9332017717 & 22 & 7036448485 & 6645965863\\
4 & 6730429813 & 6739387909 & 23 & 7131860335 & 9418370671\\
5 & 6715897465 & 6656215087 & 24 & 6668361667 & 9417447665\\
6 & 9407705011 & 9747881827 & 25 & 9353941825 & 6647811875\\
7 & 9401349785 & 8716516475 & 26 & 9540102431 & 7202213087\\
8 & 6728607917 & 8718945791 & 27 & 6296040455 & 6645441605\\
9 & 6647811875 & 9743023195 & 28 & 9551948213 & 9431669285\\
10 & 9805329995 & 6668361667 & 29 & 9330250261 & 9389927431\\
11 & 8599744591 & 6656791165 & 30 & 6739436359 & 6728925313\\
12 & 9058982683 & 9800875705 & 31 & 6658029055 & 6659568985\\
13 & 8886853811 & 8603240771 & 32 & 9737286781 & 9736709071\\
14 & 8944002335 & 9052061033 & 33 & 8737875173 & 8738106005\\
15 & 8944685635 & 8903235181 & 34 & 8656852271 & 8656775117\\
16 & 8885487211 & 8900892475 & 35 & 9899332585 & 9899370847\\
17 & 9062399183 & 9056746445 & \textbf{36} & \textbf{6252914545} & \textbf{6252914545}\\
18 & 8590861691 & 8591527241 & 37 & 9494215027 & 9494175505\\
19 & 9828562195 & 9831330883 & 38 & 9467087387 & 9467165801\\
\end{tabular}
\end{center}

In both columns the minimum is attained \textbf{exactly once}, at $k=36$, and
the two minima are equal:
\[
q_{\min}=6252914545.
\]
At that cut the products are
\[
P_{1,36}=\begin{pmatrix}16138741633&9899332585\\
6252914545&3835471322\end{pmatrix},
\qquad
P_{2,36}=\begin{pmatrix}16138722187&9899370847\\
6252914545&3835490768\end{pmatrix}.
\]
Both have trace
\[
T=16138741633+3835471322
 =16138722187+3835490768
 =19974212955.
\]
as expected.

Since $n=38$ is even and $q_{\min}$ is the same for both words,
Lemma~\ref{lem:matrix-formula} gives
\[
L(\omega_1)^2=L(\omega_2)^2
=\frac{T^2-4}{q_{\min}^2}
=\frac{398969183171689832021}{39098940307072557025}.
\]
Both Lagrange numbers are positive by definition, hence $L(\omega_1)=L(\omega_2)$.

It remains to show that the band graphs are not isomorphic. By
Definition~\ref{def:cyclic-equivalence}, isomorphism of the band graphs is equivalent to
cyclic equivalence of their coefficient words under shifts and reversal. To
rule this out, we use the invariant from Lemma~\ref{lem:triple-two-invariant}:
the number $\mathcal{N}$ of cyclic occurrences of the contiguous subword $222$,
counted with multiplicity within each maximal block of consecutive 2's.

In
\[
c(\omega_1)=21122112211111111112222211211221111211,
\]
the factor $222$ occurs exactly 3 times cyclically: the block $22222$ contains three consecutive $222$'s. Hence
\[
\mathcal{N}(c(\omega_1))=3.
\]
In
\[
c(\omega_2)=21122111122111111112221122211221111211,
\]
the factor $222$ occurs exactly 2 times: $c(\omega_2)$ has two runs of three consecutive 2's , each contributing one cyclic occurrence of $222$. Thus
\[
\mathcal{N}(c(\omega_2))=2.
\]
Since these counts differ, Lemma~\ref{lem:triple-two-invariant} implies that the
two coefficient words are not cyclically equivalent. Therefore, by
Definition~\ref{def:cyclic-equivalence}, the band graphs $\band(\omega_1)$ and
$\band(\omega_2)$ are not isomorphic as abstract graphs.

This completes the counterexample. The program \texttt{exact\_certificate.py}
in Appendix~A uses only the Python standard library and exact integer arithmetic.
It independently checks the finite computational data used in this section:
the validity of the two lattice paths, their coefficient words, the common trace
and the smallest lower-left entry of the cyclic matrix products, the exact value
of the square of the Lagrange number, and the inequivalence of the two coefficient
words under cyclic shift and reversal. The program does not replace the proofs
of the general lemmas in the text.

\appendix
\section{The certificate program}

\begin{verbatim}
"""Exact certificate for the D(17,9) counterexample (self-contained)."""

from fractions import Fraction

SIGMA_1 = "RRURRURRRRRRURURUURRURRRUU"
SIGMA_2 = "RRURRRURRRRRURUURURRURRRUU"
C_1 = "21122112211111111112222211211221111211"
C_2 = "21122111122111111112221122211221111211"

def counts(w):
    return w.count("R"), w.count("U")

def prefix_condition(w, a, b):
    x = y = 0
    for ch in w:
        if ch == "R":
            x += 1
        else:
            y += 1
        if a * y > b * x:
            return False
    return True

def coefficient_word(w):
    out = ["2"]
    for i in range(len(w) - 1):
        out.append("11" if w[i] == w[i + 1] else "2")
    return "".join(out)

def mat_mul(X, Y):
    return (
        (X[0][0]*Y[0][0] + X[0][1]*Y[1][0],
         X[0][0]*Y[0][1] + X[0][1]*Y[1][1]),
        (X[1][0]*Y[0][0] + X[1][1]*Y[1][0],
         X[1][0]*Y[0][1] + X[1][1]*Y[1][1]),
    )

def A(d):
    return ((d, 1), (1, 0))

def cyclic_products(c):
    m = len(c)
    mats = [A(int(ch)) for ch in c]
    out = []
    for k in range(m):
        X = ((1, 0), (0, 1))
        for j in range(m):
            X = mat_mul(X, mats[(k + j) % m])
        out.append(X)
    return out

def canonical_cyclic(w):
    r = w[::-1]
    return min(min(x[i:] + x[:i] for i in range(len(x))) for x in (w, r))

def cyclic_factor_count(w, pat):
    n = len(w)
    return sum(1 for i in range(n) if (w * 2)[i:i + len(pat)] == pat)

def check(cond, msg):
    assert cond, "FAILED: " + msg
    print("ok:", msg)

def main():
    for w, c in ((SIGMA_1, C_1), (SIGMA_2, C_2)):
        check(counts(w) == (17, 9), "17 R and 9 U")
        check(prefix_condition(w, 17, 9), "stays below the diagonal")
        check(coefficient_word(w) == c, "coefficient word matches")
    P1 = cyclic_products(C_1)
    P2 = cyclic_products(C_2)
    T1 = {p[0][0] + p[1][1] for p in P1}
    T2 = {p[0][0] + p[1][1] for p in P2}
    check(len(T1) == 1 and len(T2) == 1, "trace is cut-independent")
    T = next(iter(T1))
    check(T == next(iter(T2)) == 19974212955, "common trace T")
    q1 = [p[1][0] for p in P1]
    q2 = [p[1][0] for p in P2]
    check(min(q1) == min(q2) == 6252914545, "common minimal q")
    # Python index 35 corresponds to the mathematical cut k = 36.
    check(q1.index(min(q1)) == 35 and q2.index(min(q2)) == 35,
          "unique minimum at mathematical cut 36")
    check(P1[35] == ((16138741633, 9899332585), (6252914545, 3835471322)),
          "P1 at mathematical cut 36")
    check(P2[35] == ((16138722187, 9899370847), (6252914545, 3835490768)),
          "P2 at mathematical cut 36")
    L2 = Fraction(T * T - 4, min(q1) * min(q1))
    check(L2 == Fraction(398969183171689832021, 39098940307072557025),
          "exact L^2")
    check(canonical_cyclic(C_1) != canonical_cyclic(C_2),
          "cyclic words inequivalent up to reversal")
    check(cyclic_factor_count(C_1, "222") != cyclic_factor_count(C_2, "222"),
          "222-counts differ")
    print()
    print("EXACT CERTIFICATE PASSED")

if __name__ == "__main__":
    main()
\end{verbatim}

\end{document}